\documentclass[twocolumn]{article}
\usepackage{amsmath}
\usepackage{amsthm}
\usepackage{amsfonts}
\usepackage{amssymb}
\usepackage{mathrsfs}
\usepackage{amscd}
\usepackage{url}

\newcommand{\re}{\mathbb{R}}

\newcommand{\N}{\mathbb{N}}

\newcommand{\lmd}{\lambda}

\newcommand{\sig}{\sigma}
\newcommand{\Sig}{\Sigma}

\newcommand{\reff}[1]{(\ref{#1})}

\newcommand{\prm}{\prime}

\newcommand{\bdes}{\begin{description}}
\newcommand{\edes}{\end{description}}
\newcommand{\bal}{\begin{align}}
\newcommand{\eal}{\end{align}}
\newcommand{\bnum}{\begin{enumerate}}
\newcommand{\enum}{\end{enumerate}}
\newcommand{\bit}{\begin{itemize}}
\newcommand{\eit}{\end{itemize}}
\newcommand{\bea}{\begin{eqnarray}}
\newcommand{\eea}{\end{eqnarray}}
\newcommand{\be}{\begin{equation}}
\newcommand{\ee}{\end{equation}}
\newcommand{\baray}{\begin{array}}
\newcommand{\earay}{\end{array}}
\newcommand{\bsry}{\begin{subarray}}
\newcommand{\esry}{\end{subarray}}
\newcommand{\bca}{\begin{cases}}
\newcommand{\eca}{\end{cases}}
\newcommand{\bcen}{\begin{center}}
\newcommand{\ecen}{\end{center}}
\newcommand{\bbm}{\begin{bmatrix}}
\newcommand{\ebm}{\end{bmatrix}}
\newcommand{\bmx}{\begin{matrix}}
\newcommand{\emx}{\end{matrix}}
\newcommand{\bpm}{\begin{pmatrix}}
\newcommand{\epm}{\end{pmatrix}}
\newcommand{\btab}{\begin{tabular}}
\newcommand{\etab}{\end{tabular}}

\newtheorem{theorem}{Theorem}

\theoremstyle{definition}

\setcounter{equation}{0}

\begin{document}

\title{Local Versus Global Conditions in Polynomial Optimization}

\author{Jiawang Nie \\
Department of Mathematics, \\
  University of California San Diego,  \\
  La Jolla, CA 92093, USA \\
Email: njw@math.ucsd.edu
}

\date{}

\maketitle

This paper briefly reviews the relationship between
local and global optimality conditions in \cite{Nie-opcd}.
Consider the polynomial optimization problem
\be  \label{pop:gen}
\left\{\baray{rl}
\min & f(x) \\
s.t. & h_i(x) = 0 \,(i=1,\ldots,m_1),  \\
 & g_j(x) \geq  0 \,(j=1,\ldots,m_2),
\earay \right.
\ee
where $f, h_1,\ldots, h_{m_1}, g_1, \ldots, g_{m_2}$
are real polynomials in $x := (x_1, \ldots, x_n)$.
For convenience, denote
\[
h:=(h_1,\ldots,h_{m_1}), \quad
g:=(g_1,\ldots,g_{m_2})
\]
and $g_0:=1$.
Let $K$ be the feasible set of \reff{pop:gen}.
When there are no equality (resp., inequality) constraints,
the tuple $h = \emptyset$ and $m_1 =0$
(resp., $g = \emptyset$ and $m_2 =0$).

The problem \reff{pop:gen} can be treated as a general nonlinear program.
By classical nonlinear optimization methods,
we can typically get a
Karush-Kuhn-Tucker (KKT) point of \reff{pop:gen}.
Theoretically, it is NP-hard to check whether a KKT point
is a local minimizer or not. However,
it is often not too hard to do that in practice.
This is because there exist standard conditions
ensuring local optimality.
On the other hand, it is often much harder to get a global minimizer.
In practice, sometimes we may be able to get a global optimizer,
but it is typically hard to verify the global optimality.
A major reason for this is lack of easily checkable
global optimality conditions in nonlinear programming theory.

Local and global optimality conditions
are presumably very different, except special cases like convex optimization.
For general nonconvex optimization, little is known about
global conditions. However, for polynomial optimization,
this is possible by using representations of nonnegative polynomials.
Interestingly, global optimality conditions are
closely related to the local ones,
which was discovered in the paper \cite{Nie-opcd}.

\section{Local Optimality Conditions}

Let $u$ be a local minimizer of \reff{pop:gen} and
\[
J(u) \, := \, \{j_1, \ldots, j_r\}
\]
be the index set of active inequality constraints.
If the {\it constraint qualification condition (CQC)} holds at $u$,
i.e., the gradient vectors
\[
\nabla h_1(u), \ldots, \nabla h_{m_1}(u),
\nabla g_{m_1}(u), \ldots, \nabla g_{j_r}(u)
\]
are linearly independent, then
there exist Lagrange multipliers $\lmd_1,\ldots,\lmd_{m_1}$
and $\mu_{1},\ldots,\mu_{m_2} $ satisfying
\be  \label{opcd:grad}
\nabla f(u) =  \sum_{i=1}^{m_1} \lmd_i \nabla h_i(u) +
\sum_{j =1 }^{m_2} \mu_j \nabla g_j (u),
\ee
\be \label{op-cd:comc}
\left. \baray{c}
\mu_{1} g_{1}(u) = \cdots = \mu_{m_2} g_{m_2}(u) = 0,  \\
\mu_{1} \geq 0, \ldots,  \mu_{m_2} \geq 0.
\earay \right \}
\ee
The equation \reff{opcd:grad} is called the {\it first order optimality condition (FOOC)},
and \reff{op-cd:comc} is called the {\it complementarity condition}.
If it further holds that
\be \label{optcd-scc}
\mu_{1} + g_{1}(u) >0, \ldots,  \mu_{m_2} + g_{m_2}(u) > 0,
\ee
then the {\it strict complementarity condition (SCC)} holds at $u$.
The strict complementarity is equivalent to $\mu_j >0$ for every $j \in J(u)$.
Let $L(x)$ be the Lagrange function
\[
L(x) := f(x) - \sum_{i=1}^{m_1} \lmd_i h_i(x) -
\sum_{j\in J(u)} \mu_j  g_j (x).
\]
Clearly, \reff{opcd:grad} implies the gradient $\nabla_x L(u) = 0$.
The polynomials $f,h_i,g_j$ are smooth functions.
Thus, under the constraint qualification condition,
the {\it second order necessity condition (SONC)} holds:
\be \label{opt:sonc}
v^T \nabla_x^2 L(u) v \geq 0 \qquad  \forall \, v \in G(u)^\perp.
\ee
In the above, $G(u)$ denotes the Jacobian of the active constraining polynomials
\[
G(u) = \mbox{Jacobian} \Big(
h_1, \ldots,  h_{m_1}, g_{j_1}, \ldots, g_{j_r}
\Big)\Big|_{x=u}
\]
and $G(u)^\perp$ denotes the null space of $G(u)$. If it holds that
\be \label{sosc-optcnd}
v^T \nabla_x^2 L(u) v > 0 \quad  \mbox{ for all } \, 0 \ne v \in G(u)^\perp,
\ee
then the {\it second order sufficiency condition (SOSC)} holds at $u$.
The relations among the above conditions can be summarized as follows:
if CQC holds at $u$, then \reff{opcd:grad},
\reff{op-cd:comc} and \reff{opt:sonc} are necessary conditions for
$u$ to be a local minimizer, but they may not be sufficient;
if \reff{opcd:grad}, \reff{op-cd:comc}, \reff{optcd-scc} and \reff{sosc-optcnd}
hold at $u \in K$, then $u$ is a strict local minimizer of \reff{pop:gen}.
We refer to \cite[Section~3.3]{Brks} for such classical results.

Mathematically, CQC, SCC and SOSC are sufficient
for local optimality, but may not be necessary.
However, for {\it generic} cases, they are
sufficient and necessary conditions.
This is a major conclusion of \cite{Nie-opcd}.
Denote by $\re[x]_d$ the set of real polynomials in $x$ and
with degrees at most $d$. Let $[m]:=\{1,\ldots,m\}$.
The following theorem is from \cite{Nie-opcd}.

\begin{theorem} \label{mthm:opcd=gen}
Let $d_0, d_1, \ldots, d_{m_1}, d_1^{\prm}, \ldots, d_{m_2}^{\prm}$ be positive integers.
Then there exist a finite set of nonzero polynomials $\varphi_1,\ldots,\varphi_L$,
which are in the coefficients of polynomials
$f \in \re[x]_{d_0}$, $h_i \in \re[x]_{d_i}$ ($i\in [m_1]$),
$g_j \in \re[x]_{d_j^{\prm}}$ ($j\in [m_2]$) such that if
\[
\baray{c}
\varphi_1( f, h_1, \ldots,  h_{m_1}, g_{1}, \ldots, g_{m_2} ) \ne 0, \\
\vdots \\
\varphi_L( f, h_1, \ldots,  h_{m_1}, g_{1}, \ldots, g_{m_2} ) \ne 0,
\earay
\]
then CQC, SCC and SOSC
hold at every local minimizer of \reff{pop:gen}.
\end{theorem}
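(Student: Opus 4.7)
The plan is to view \reff{pop:gen} as a family parametrized by the coefficient vector of $(f,h_i,g_j)$; under the fixed degree bounds $d_0, d_i, d_j^{\prm}$, this coefficient vector ranges over a finite-dimensional real vector space $\mathcal{C}$. I would introduce the \emph{bad set}
\[
\mathcal{B} := \{\, c \in \mathcal{C} : \text{some local minimizer of the instance at } c \text{ violates CQC, SCC, or SOSC}\,\}
\]
and aim to show $\mathcal{B}$ is contained in the real zero set of finitely many nonzero polynomials in the coefficients; those polynomials will serve as the $\varphi_\ell$ of the theorem.

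\textbf{Incidence varieties and elimination.} For each candidate active set $J \subseteq [m_2]$ and each failure mode, I would build an incidence variety over $\mathcal{C}$. With auxiliary variables $(x,\lmd,\mu) \in \re^{n+m_1+m_2}$ for the KKT data, and $v \in \re^n$ for critical directions, the relevant loci are cut out by:
\bit
\item the KKT system \reff{opcd:grad}--\reff{op-cd:comc} specialized to active set $J$, i.e., $g_j(x) = 0$ for $j \in J$ and $\mu_j = 0$ for $j \notin J$;
\item for CQC failure, the vanishing of every maximal minor of the active Jacobian $G(x)$;
\item for SCC failure, $\mu_j = 0$ for some $j \in J$;
\item for SOSC failure, $G(x)v = 0$, $\|v\|^2 = 1$, and $v^T \nabla_x^2 L(x) v = 0$, capturing the boundary case where the reduced Hessian ceases to be strictly positive definite.
\eit
Each locus is algebraic in $(c, x, \lmd, \mu, v)$, so its projection to $\mathcal{C}$ is semi-algebraic by Tarski--Seidenberg and in fact is contained in the vanishing of an explicit polynomial ideal produced by iterated resultants or Gr\"obner elimination. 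Taking the union of generators of these ideals over all $J \subseteq [m_2]$ and all three failure modes produces the finite list $\varphi_1, \ldots, \varphi_L$.

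\textbf{The genuine difficulty: non-vanishing of the $\varphi_\ell$.} The elimination step is routine; the heart of the proof is showing that none of the $\varphi_\ell$ is identically zero, since otherwise the statement is vacuous. Equivalently, for each active set $J$ and each failure mode I must exhibit one coefficient tuple in $\mathcal{C}$ at which no KKT point with active set $J$ falls into that mode. I would produce such witnesses explicitly: start from a strongly convex quadratic $f$ together with linear $h_i$ and $g_j$ in sufficiently general position, for which CQC, SCC, and SOSC at every KKT point are textbook consequences of generic choice, and then perturb to the prescribed degrees by small higher-order terms. A Sard-type openness argument shows that the three nondegeneracy conditions are stable under small perturbation, producing a point outside each bad projection and forcing the corresponding eliminants to be nonzero polynomials in the coefficients. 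Combining the lists from all $J \subseteq [m_2]$ completes the proof.
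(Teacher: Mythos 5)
The survey you are reading states Theorem~\ref{mthm:opcd=gen} without proof, citing \cite{Nie-opcd}, so the comparison is with the argument there. Your skeleton (incidence varieties indexed by active sets and failure modes, projection to the coefficient space $\mathcal{C}$, elimination) is the right one and matches \cite{Nie-opcd}; the gap is precisely in the step you call the heart of the proof. Exhibiting one witness tuple at which CQC, SCC, SOSC hold, plus stability of these conditions under small perturbation, shows only that the bad set $\mathcal{B}$ misses a Euclidean-open subset of $\mathcal{C}$; it does not show that $\mathcal{B}$ is contained in a proper algebraic subset, which is what the theorem demands. The projection of a real incidence variety is in general a full-dimensional semialgebraic set whose Zariski closure is all of $\mathcal{C}$: for the toy family $W=\{(c,x)\in\re^2:\ x^2-c=0\}$ the elimination ideal in $\re[c]$ is zero, the image is $\{c\geq 0\}$, and its complement $\{c<0\}$ is a nonempty open set. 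In that situation every eliminant you produce is identically zero, your list of $\varphi_\ell$ is empty, and yet the bad set is not contained in any hypersurface. So ``witness point outside the projection $\Rightarrow$ eliminant nonzero'' is false, and the theorem could in principle fail exactly in this way; ruling it out is the actual content. What \cite{Nie-opcd} supplies instead is a dimension count: fiber each incidence variety $W\subseteq \mathcal{C}\times\{(x,\lambda,\mu,v)\}$ over the auxiliary variables, note that for fixed $(x,\lambda,\mu,v)$ the stationarity, complementarity and Hessian equations are affine-linear in the coefficients of $f,h_i,g_j$ and cut out a subspace of $\mathcal{C}$ whose codimension strictly exceeds the dimension of the auxiliary parameter space, and conclude $\dim W<\dim\mathcal{C}$, so the projection cannot be dominant and its Zariski closure is a proper subvariety. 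Once that is done, no witness construction is needed at all.

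Two smaller points. Your CQC-failure locus is cut out by the KKT system together with the vanishing of the maximal minors of $G(x)$; but at a local minimizer where CQC fails, Lagrange multipliers need not exist, so this locus can miss bad instances. The CQC incidence variety should impose only feasibility, activity of the constraints in $J$, and the rank deficiency, with no multiplier variables. Second, your reduction of SOSC failure to the boundary equation $v^T\nabla_x^2L(x)v=0$ with $v\in G(x)^\perp$ silently uses that a local minimizer satisfying CQC obeys the second order necessary condition \reff{opt:sonc}, so that failure of strict positivity forces an exact zero; this is correct but must be stated, since the incidence variety itself only sees KKT points, not minimizers.
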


Theorem~\ref{mthm:opcd=gen} implies that
the local conditions CQC, SCC and SOSC hold at every local minimizer
in the space of input polynomials with given degrees,
except a union of finitely many hypersurfaces.
So, they hold in an open dense set in the space of input polynomials.
Therefore, CQC, SCC and SOSC can be used
as sufficient and necessary conditions
in checking local optimality,
for {\it almost all} polynomial optimization problems.
This fact was observed in nonlinear programming.

\section{A global optimality condition}

Let $u$ be a feasible point for \reff{pop:gen}.
By the definition, $u$ is a global minimizer if and only if
\be \label{df:gopt}
f(x) - f(u) \geq 0  \quad \forall \, x \in K.
\ee
Typically, it is quite difficult to check \reff{df:gopt} directly.
In practice, people are interested in easily checkable
conditions ensuring \reff{df:gopt}. For polynomial optimization,
this is possible by using sum-of-squares type representations.

Let $\re[x]$ be the ring of real polynomials in $x:=(x_1,\ldots,x_n)$.
A polynomial $p \in \re[x]$ is said to be {\it sum-of-squares} (SOS)
if $p=p_1^2+\cdots+p_k^2$ for $p_1,\ldots, p_k \in \re[x]$.
A sufficient condition for \reff{df:gopt}
is that there exist polynomials
$\phi_1, \ldots, \phi_{m_1} \in \re[x]$ and SOS polynomials
$\sig_0, \sig_1, \ldots, \sig_{m_2} \in \re[x]$ such that
\be \label{cond:gopt}
f(x) - f(u)  =  \sum_{i=1}^{m_1} \phi_i (x) h_i(x)
+  \sum_{j=0}^{m_2} \sig_j(x) g_j(x).
\ee
The equality in \reff{cond:gopt} is a polynomial identity in the variables of $x$.
Note that for every feasible point $x$ in \reff{pop:gen},
the right hand side in \reff{cond:gopt} is always nonnegative.
This is why \reff{cond:gopt} ensures that
$u$ is a global minimizer.
The condition \reff{cond:gopt} was investigated by
Lasserre~\cite{Las01}. It was a major tool for
solving the optimization problem \reff{pop:gen} globally.
We call \reff{cond:gopt} a global optimality condition
for \reff{pop:gen}.

People wonder when the global optimality condition holds.
The representation of $f(x)-f(u)$ in \reff{cond:gopt}
was motivated by Putinar's Positivstellensatz \cite{Put},
which gives SOS type certificates for positive or nonnegative polynomials
on the set $K$. Denote
\[
\langle h \rangle :=   h_1 \re[x] + \cdots +  h_{m_1} \re[x],
\]
which is the ideal generated by the polynomial tuple $h$.
Let $\Sig[x]$ be the set of all SOS polynomials in $\re[x]$.
The polynomial tuple $g$ generates the quadratic module:
\[
Q(g) := \Sig[x] +  g_1 \Sig[x] + \cdots +  g_{m_2} \Sig[x].
\]
If there exists a polynomial $p\in \langle h \rangle + Q(g)$
such that the set $\{x \in \re^n:\, p(x) \geq 0 \}$ is compact,
then $\langle h \rangle + Q(g)$ is said to be {\it archimedean}.
The archimedeanness of $\langle h \rangle + Q(g)$ implies the compactness
of $K$, while the reverse is not necessary. However, when $K$ is compact,
we can always add a redundant condition like $R- \|x\|_2^2 \geq 0$
to the tuple $g$ so that $\langle h \rangle + Q(g)$ is archimedean.
Hence, archimedeanness of $\langle h \rangle + Q(g)$ is almost equivalent to
the compactness of $K$. Putinar's Positivstellensatz \cite{Put} says that if
$\langle h \rangle + Q(g)$ is archimedean,
then every polynomial which is strictly positive on $K$
belongs to $\langle h \rangle + Q(g)$ (cf.~\cite{Put}).

The global optimality condition \reff{cond:gopt} 
is equivalent to the membership
\[
f(x) - f(u) \in \langle h \rangle + Q(g).
\]
When $u$ is a global minimizer of \reff{pop:gen}, the polynomial
\[
\tilde{f}(x):=f(x) - f(u)
\]
is nonnegative on $K$, but not strictly positive on $K$.
This is because $u$ is always a zero point of $\tilde{f}$ on $K$.
So, Putinar's Positivstellensatz itself does not imply
the global optimality condition \reff{cond:gopt}.
Indeed, there are counterexamples that
\reff{cond:gopt} may not hold. For instance,
when $f$ is the Motzkin polynomial
$x_1^2x_2^2(x_1^2+x_2^2-3x_3^2)+x_3^6$
and $K$ is the unit ball,
then \reff{cond:gopt} fails to hold.

However, the global optimality condition \reff{cond:gopt}
holds for {\it almost all} polynomials $f, h_i, g_j$, i.e.,
it holds in an open dense set in the space of input polynomials.
This is a major conclusion of \cite{Nie-opcd}.
The ideal $\langle h \rangle$ is said to be {\it real} if
every polynomial in $\re[x]$ vanishing on the set $\{x \in \re^n: \, h(x) = 0 \}$
belongs to $\langle h \rangle$ (cf.~\cite{BCR}). This is a general condition.
For instance, if $\langle h \rangle$ is a prime ideal
and $h$ has a nonsingular real zero,
then $\langle h \rangle$ is real (cf.~\cite{BCR}).
As pointed out earlier, when the feasible set $K$ is compact,
we can generally assume that $\langle h \rangle + Q(g)$ is archimedean.
Interestingly, the local conditions
CQC, SCC and SOSC imply the global optimality condition
\reff{cond:gopt}, under the archimedeanness of $\langle h \rangle + Q(g)$.
The following theorem
is a consequence of the results in \cite{Nie-opcd}.

\begin{theorem} \label{thm:gopt-cond}
Assume that the ideal $\langle h \rangle$ is real
and the set $\langle h \rangle + Q(g)$ is archimedean.
If the constraint qualification condition,
strict complementarity condition, and
second order sufficiency condition hold at every
global minimizer of \reff{pop:gen},
then the global optimality condition \reff{cond:gopt} holds.
\end{theorem}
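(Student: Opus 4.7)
My plan is to derive the theorem from Marshall's \emph{boundary Hessian condition} (BHC) representation theorem, using CQC, SCC and SOSC at every global minimizer to verify BHC. The intuition is that, on the real variety $V(h)$, BHC describes exactly the right local behavior of a nonnegative polynomial for its membership in an archimedean quadratic module, and this behavior is precisely what CQC+SCC+SOSC enforce at an optimizer.

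First I would show that the set $K^\ast \subseteq K$ of global minimizers is finite. Archimedeanness of $\langle h\rangle+Q(g)$ forces $K$ to be compact, and SOSC together with CQC implies that each $u\in K^\ast$ is a strict local minimizer of $\tilde{f}=f-f(u)$ on $K$, hence an isolated zero of $\tilde{f}|_K$; compactness then gives $|K^\ast|<\infty$. Next, at each $u\in K^\ast$, I would verify BHC directly. CQC lets me introduce local smooth coordinates $(y,z,w)\in\re^{m_1}\times\re^r\times\re^{n-m_1-r}$ around $u$ with $h_i=y_i$, $g_{j_\ell}=z_\ell$ for $j_\ell\in J(u)$, and the $w$-subspace corresponding to $G(u)^\perp$. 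Plugging FOOC \reff{opcd:grad} into the Taylor expansion of $\tilde{f}$ and using the definition of $L$, we get
\[
\tilde{f}=\sum_{i=1}^{m_1}\lmd_i y_i+\sum_{\ell=1}^{r}\mu_{j_\ell} z_\ell+\half\,w^T H w+\mbox{higher order},
\]
where $H$ is the $w$-block of $\nabla_x^2 L(u)$; SCC gives $\mu_{j_\ell}>0$ and SOSC \reff{sosc-optcnd} gives $H\succ 0$. Restricting to $V(h)$ (i.e., $y=0$), this is exactly Marshall's BHC at $u$.

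Finally I would invoke Marshall's representation theorem: when $\langle h\rangle$ is real, $\langle h\rangle+Q(g)$ is archimedean, and $\tilde{f}\geq 0$ on $K$ has finitely many zeros in $K$ with BHC at each, one has $\tilde{f}\in \langle h\rangle+Q(g)$, which is \reff{cond:gopt}. I expect the main obstacle to be this globalization step. Its proof combines a Putinar-type certificate in the region where $\tilde{f}$ is strictly positive with a careful local SOS construction near each $u\in K^\ast$ that cancels the BHC leading terms modulo $\langle h\rangle$, and then uses archimedeanness to splice these local and global pieces into a single polynomial identity. The reality hypothesis on $\langle h\rangle$ is essential here: it ensures that the analysis carried out on $V(h)$ faithfully reflects polynomial identities modulo the ideal $\langle h\rangle$ rather than only modulo the (possibly larger) vanishing ideal of $V(h)$.
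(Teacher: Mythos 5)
Your proposal follows essentially the same route as the paper: it verifies Marshall's boundary Hessian condition at each global minimizer from CQC, SCC and SOSC (this is Theorem~3.1 of \cite{Nie-opcd}), then applies Marshall's BHC representation theorem (Theorem~9.5.3 of \cite{MarBk}) to obtain $f-f_{min}\in I(V)+Q(g)$, and finally uses the reality of $\langle h\rangle$ to replace $I(V)$ by $\langle h\rangle$. The extra details you supply (finiteness of the set of global minimizers, the local coordinate expansion) are correct and consistent with the cited results.
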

\begin{proof}
At every global minimizer $u$ of $f$ on $K$,
the CQC, SCC and SOSC conditions implies that
the boundary hessian condition holds at $u$,
by Theroem~3.1 of \cite{Nie-opcd}.
The boundary hessian condition was introduced by Marshall \cite{Mar09}
(see Condition~2.3 of \cite{Nie-opcd}).
Let $f_{min}$ be the global minimum value of \reff{pop:gen}.
Denote $V = \{ x \in \re^n: \, h(x) = 0 \}$.
Let $I(V)$ be the set of all polynomials vanishing on $V$.
By Theorem~9.5.3 of \cite{MarBk}
(also see Theorem~2.4 of \cite{Nie-opcd}), we have
\[
f(x) - f_{min} \in I(V) + Q(g).
\]
Because $\langle h \rangle $ is real,
$\langle h \rangle = I(V)$ and
\[
f(x) - f(u) \in \langle h \rangle + Q(g).
\]
So, the global optimality condition \reff{cond:gopt} holds.
\end{proof}

By Theorem~\ref{mthm:opcd=gen}, the local
conditions CQC, SCC and SOSC hold generically, i.e.,
in an open dense set in the space of input polynomials.
Therefore, the global optimality condition \reff{cond:gopt}
also holds generically, when
$\langle h \rangle$ is real
and $\langle h \rangle + Q(g)$ is archimedean.

\section{Lasserre's hierarchy}

Lasserre \cite{Las01} introduced a sequence of semidefinite relaxations
for solving \reff{pop:gen} globally,
which is now called {\it Lasserre's hierarchy} in the literature.
It can be desribed in two equivalent versions.
One version uses SOS type representations, while the other one uses
moment and localizing matrices.
They are dual to each other, as shown in \cite{Las01}.
For convenience of description, we present the SOS version here.
For each $k \in \N$ (the set of nonnegative integers),
denote the sets of polynomials (note $g_0 = 1$)
\[
\langle h \rangle_{2k}: =
\left\{
\left. \overset{m_1}{ \underset{ i=1}{\sum} }  \phi_i h_i \right|
\baray{c}
\mbox{ each } \phi_i \in \re[x] \\
\mbox{ and } \deg (\phi_i h_i ) \leq 2k
\earay
\right\},
\]
\[
Q_k(g):= \left\{
\left. \overset{m_2}{ \underset{ j=0}{\sum} }   \sig_j g_j \right|
\baray{c}
\mbox{each } \sig_j \in  \Sig[x]  \\
\mbox{ and } \deg( \sig_j g_j) \leq 2k
\earay
\right\}.
\]
Note that $\langle h \rangle_{2k}$ is a truncation of $\langle h \rangle$
and $Q_k(g)$ is a truncation of $Q(g)$.
The SOS version of Lasserre's hierarchy is
the sequence of relaxations
\be  \label{sos:Put}
\max \quad \gamma \quad \mbox{s.t.} \quad
f - \gamma  \in  \langle h \rangle_{2k} + Q_k(g)
\ee
for $k=1,2,\ldots,$. The problem~\reff{sos:Put}
is equivalent to a {\it semidefinite program} (SDP).
So it can be solved as an SDP by numerical methods.
For instance, the software {\tt GloptiPoly~3}  \cite{GloPol3}
and {\tt SeDuMi} \cite{sedumi} can be used to solve it.
We refer to \cite{LasBok,Lau,MarBk} for recent work
in polynomial optimization.

Let $f_{min}$ be the minimum value of \reff{pop:gen} and
$f_k$ denote the optimal value of \reff{sos:Put}.
It was shown that (cf.~\cite{Las01})
\[
\cdots \leq f_k \leq  f_{k+1} \leq \cdots \leq f_{min}.
\]
When $\langle h \rangle + Q(g)$ is archimedean,
Lasserre \cite{Las01} proved the asymptotic convergence
\[
f_k \to f_{min} \quad \mbox{ as } \quad k\to \infty.
\]
If $f_k = f_{min}$ for some $k$,
Lasserre's hierarchy is said to have {\it finite convergence}.
It is possible that the sequence $\{f_k\}$ has only asymptotic, but not finite, convergence.
For instance, this is the case when $f$ is the Motzkin polynomial
$x_1^2x_2^2(x_1^2+x_2^2-3x_3^2)+x_3^6$
and $K$ is the unit ball \cite[Example~5.3]{Nie-jac}.
Indeed, such $f$ always exists whenever $\dim(K)\geq 3$,
which can be implied by \cite[Prop.~6.1]{Sch99}.
However, such cases do not happen very much.
Lasserre's hierarchy often has finite convergence in practice,
which was demonstrated by extensive numerical experiments
in polynomial optimization (cf.~\cite{HenLas03,HenLas05}).

A major conclusion of \cite{Nie-opcd} is that Lasserre's hierarchy
almost always has finite convergence.
Specifically, it was shown that
Lasserre's hierarchy has finite convergence
when the local conditions CQC, SCC and SOSC are satisfied,
under the archimedeanness.
The following theorem is shown in \cite{Nie-opcd}.

\begin{theorem} \label{mthm:opc=>fcvg}
Assume that $\langle h \rangle + Q(g)$ is archimedean.
If the constraint qualification, strict complementarity and
second order sufficiency conditions
hold at every global minimizer of \reff{pop:gen},
then Lasserre's hierarchy of \reff{sos:Put} has finite convergence.
\end{theorem}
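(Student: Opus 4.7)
The plan is to reduce the finite-convergence claim to the global-optimality representation, and then observe that any concrete SOS certificate automatically sits at a finite level of the hierarchy.

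First, I would replay the chain already used in the proof of Theorem~\ref{thm:gopt-cond}. By Theorem~3.1 of \cite{Nie-opcd}, the triple of local conditions CQC, SCC and SOSC at a point $u$ implies the boundary hessian condition (BHC) at $u$. Applying this at every global minimizer of \reff{pop:gen} yields BHC at every global minimizer.

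Second, since $\langle h \rangle + Q(g)$ is archimedean and BHC holds at every global minimizer, Marshall's archimedean representation theorem (Theorem~9.5.3 of \cite{MarBk}, restated as Theorem~2.4 of \cite{Nie-opcd}) produces a representation
\[
f(x) - f_{min} \;=\; \phi(x) + \sum_{j=0}^{m_2} \sig_j(x)\, g_j(x),
\]
with each $\sig_j \in \Sig[x]$ and $\phi$ in the ideal of polynomials vanishing on $V = \{x \in \re^n : h(x)=0\}$. In the setting of Theorem~\ref{thm:gopt-cond}, realness of $\langle h \rangle$ was invoked to identify this ideal with $\langle h \rangle$ itself. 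Here that hypothesis is absent, so the delicate step, which is carried out in \cite{Nie-opcd}, is to argue that archimedeanness of $\langle h \rangle + Q(g)$ together with BHC at the global minimizers is already enough to place $\phi$ inside $\langle h \rangle$, giving
\[
f(x) - f_{min} \;\in\; \langle h \rangle + Q(g).
\]

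Third, once a concrete decomposition
\[
f(x) - f_{min} = \sum_{i=1}^{m_1}\phi_i(x)\,h_i(x) + \sum_{j=0}^{m_2}\sig_j(x)\,g_j(x)
\]
is in hand, the polynomials $\phi_i$ and $\sig_j$ are fixed and of finite degree, so there exists $k \in \N$ with $\deg(\phi_i h_i),\, \deg(\sig_j g_j) \le 2k$ for all $i,j$. By the very definitions of $\langle h \rangle_{2k}$ and $Q_k(g)$, this is precisely the membership $f - f_{min} \in \langle h \rangle_{2k} + Q_k(g)$. Hence $\gamma = f_{min}$ is feasible for \reff{sos:Put} at level $k$, so $f_k \geq f_{min}$; combined with the standard chain $f_k \le f_{min}$, we conclude $f_k = f_{min}$, which is the desired finite convergence.

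The main obstacle is the second step, namely upgrading Marshall's representation from ``ideal part in $I(V)$'' to ``ideal part in $\langle h \rangle$'' without a realness hypothesis. Once that algebraic upgrade is secured, the first step is a direct citation and the third step is only degree bookkeeping on a fixed finite SOS decomposition; there is no further analytic or combinatorial content.
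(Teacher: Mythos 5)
Your overall architecture --- produce an SOS-type certificate for $f - f_{min}$ in $\langle h \rangle + Q(g)$, then observe that any concrete certificate lives at some finite level $k$ --- is the right one, and it matches how \cite{Nie-opcd} proceeds (the present paper states Theorem~\ref{mthm:opc=>fcvg} without proof, citing that reference; its proof of Theorem~\ref{thm:gopt-cond} is the template you are imitating). Your first and third steps are correct: Theorem~3.1 of \cite{Nie-opcd} does yield the boundary hessian condition at every global minimizer, and once $f - f_{min} = \sum_i \phi_i h_i + \sum_j \sig_j g_j$ is written with fixed $\phi_i, \sig_j$, taking $2k$ at least the maximum of the degrees $\deg(\phi_i h_i)$, $\deg(\sig_j g_j)$ makes $\gamma = f_{min}$ feasible in \reff{sos:Put}, so $f_k \ge f_{min}$ and hence $f_k = f_{min}$.

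The gap is exactly where you locate it, and you have not closed it: deferring ``the delicate step'' to the paper being proved is not a proof. Moreover, the route you sketch --- first get $f - f_{min} \in I(V) + Q(g)$ from Marshall's theorem, then ``upgrade'' the ideal part into $\langle h \rangle$ --- cannot work as a formal post-processing step, because $I(V) + Q(g)$ is in general strictly larger than $\langle h \rangle + Q(g)$. For instance, with $n=1$, $h = (x_1^2)$ and $g = (1 - x_1^2)$, one has $I(V) = \langle x_1 \rangle$, so $-x_1 \in I(V) + Q(g)$, yet $-x_1 \notin \langle x_1^2 \rangle + Q(g)$: evaluating a putative identity $-x_1 = \phi x_1^2 + \sig_0 + \sig_1(1-x_1^2)$ at $0$ forces $\sig_0, \sig_1$ to vanish to order two there, and then dividing by $x_1^2$ gives a contradiction at $0$. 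So the passage to $\langle h \rangle$ must genuinely use the local optimality conditions, not just archimedeanness and the $I(V)$-representation. The way this is actually handled in \cite{Nie-opcd} is to apply Marshall's boundary-hessian theorem directly to the quadratic module $M = \langle h \rangle + Q(g)$ rather than to $I(V) + Q(g)$: at each global minimizer $u$ the local parameters required by the BHC are taken to be the active constraints $h_1,\ldots,h_{m_1}, g_j$ ($j \in J(u)$), which belong to $M$, and CQC is precisely the statement that these are local parameters at $u$ (equivalently, CQC makes $\langle h \rangle$ locally real at each minimizer, which is all the underlying local--global principle needs, the minimizers being finitely many by SOSC and compactness). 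This is why Theorem~\ref{mthm:opc=>fcvg} carries no realness hypothesis while Theorem~\ref{thm:gopt-cond}, proved via the detour through $I(V)$, does. Until you supply this step, your argument proves only the weaker statement in which $\langle h \rangle$ is additionally assumed real.
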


By Theorem~\ref{mthm:opcd=gen}, the local
conditions CQC, SCC and SOSC at every local minimizer,
in an open dense set in the space of input polynomials.
This implies that, under the archimedeanness of
$\langle h \rangle + Q(g)$,
Lasserre's hierarchy has finite convergence,
in an open dense set
in the space of input polynomials.
That is, Lasserre's hierarchies
almost always (i.e., generically) have finite convergence.
This is a major conclusion of \cite{Nie-opcd}.

If one of the assumptions in Theorem~\ref{mthm:opc=>fcvg}
does not hold, then $\{f_k\}$ may fail to have finite
convergence. The counterexamples were shown in \S3 of \cite{Nie-opcd}.
On the other hand, there exists other non-generic
conditions than ensures finite convergence of $\{ f_k \}$.
For instance, if $h$ has finitely many real or complex zeros,
then $\{f_k\}$ has finite convergence (cf.~\cite{Lau07,Nie-PopVar}).

Since the minimum value $f_{min}$ is typically not known,
a practical concern is how to check
$f_k = f_{min}$ in computation.
This issue was addressed in \cite{Nie-ft}.
Flat truncation is generally a sufficient and
necessary condition for checking finite convergence.

For non-generic polynomial optimization problems,
it is possible that the sequence $\{f_k\}$
does not have finite convergence to $f_{min}$.
People are interested in methods that have finite convergence
for minimizing {\it all} polynomials over a given set $K$.
The Jacobian SDP relaxation proposed in \cite{Nie-jac}
can be applied for this purpose.
It gives a sequence of lower bounds that have finite converge
to $f_{min}$, for every polynomial $f$
that has a global minimizer over a general set $K$.

\bigskip
\noindent
{\bf Acknowledgement}
The research was partially supported by the NSF grants
DMS-0844775 and DMS-1417985.

\end{document}